\theoremstyle{plain}
\newtheorem{theorem}{Theorem}[section]
\newtheorem{lemma}[theorem]{Lemma}
\newtheorem{corollary}[theorem]{Corollary}
\theoremstyle{definition}
\newtheorem{definition}[theorem]{Definition}
\newtheorem{example}[theorem]{Example}
\theoremstyle{remark}
\newtheorem{remark}[theorem]{Remark}
\begin{document}
\title[Star-regularity and regular completions]{Star-regularity and regular completions}

\author{Marino Gran }

\author{Zurab Janelidze}
\address{
\noindent  Institut de Recherche en Math\'ematique et Physique, Universit\'e catholique de Louvain, Chemin du Cyclotron 2, 1348 Louvain-la-Neuve, Belgium,  \, and \, 
Department of Mathematical Sciences, Stellenbosch University, Private Bag X1, Matieland, 7602, South Africa}
\address{
Department of Mathematical Sciences, Stellenbosch University, Private Bag X1, Matieland, 7602, South Africa}

\begin{abstract} In this paper we establish a new characterisation of star-regular categories, using a property of internal reflexive graphs, which is suggested by a recent result due to O.~Ngaha~Ngaha and the first author. We show that this property is, in a suitable sense, invariant under regular completion of a category in the sense of A.~Carboni and E.~M.~Vitale. Restricting to pointed categories, where star-regularity becomes normality in the sense of the second author, this reveals an unusual behaviour of the exactness property of normality (i.e.~the property that regular epimorphisms are normal epimorphisms) compared to other closely related exactness properties studied in categorical algebra. \\

\noindent Math. Subj. Class. 2010: 18A35, 18C05, 18D20, 08B20, 08B30, 08B05.
\end{abstract}
\keywords{Regular completion, star-regular category, normal category, $0$-regular variety, projective cover, internal reflexive graph, weak limit, ideal of null morphisms. }
\maketitle

\section{Introduction}

The context of a \emph{star-regular category} was introduced in \cite{GraZJanUrs12} (see also \cite{GraZJanRodUrs12,GraZJanRod12}) for a unification of some parallel results in the study of exactness properties in the context of regular categories, and exactness properties in the context of \emph{normal categories} \cite{ZJan10}, i.e.~pointed regular categories where every regular epimorphism is a normal epimorphism.

A star-regular category is a regular category equipped with an \emph{ideal} in the sense of C.~Ehresmann \cite{Ehr64} (i.e.~a class of morphisms which is closed under both left and right composition with morphisms in the category), satisfying suitable conditions (we recall the precise definition in Section~\ref{Equ C} below). In particular, when the ideal consists of the zero morphisms in a pointed regular category, star-regularity states that every regular epimorphism is a normal epimorphism. In the universal-algebraic language, this can be equivalently reformulated by saying that any congruence is ``generated'' by its $0$-class. Thus, for pointed varieties this is precisely the well-known \emph{$0$-regularity}, which was first studied (under a different name) in \cite{Fic68}. 

From the syntactic characterisation of $0$-regular varieties obtained in \cite{Fic68}, the following property of such varieties can be easily deduced: the congruence generated by the $0$-class of a reflexive homomorphic relation always contains the reflexive relation as a subrelation. In \cite{GraNga13} it was shown that a similar fact holds more generally in any star-regular category. In this paper we refine and further generalise this result --- see Theorem~\ref{The A} below. This leads to showing that star-regular categories $\mathbb{C}$ which appear as a regular completion (in the sense of \cite{Vit94, CarVit98}) of a category $\mathbb{P}$ with weak finite limits, can be characterised using a property of reflexive graphs in $\mathbb{P}$, expressed in terms of the ideal of $\mathbb{C}$ restricted to $\mathbb{P}$ (see Theorem~\ref{The C} below). Finally, at the end of the paper, we explicitly consider the pointed case, where star-regularity becomes normality. We point out some differences between how the regular completion behaves with respect to normality, and how it behaves with respect to some of the other closely related exactness properties, as established in \cite{RosVit01,Gra02,GraRod12}.

\section{Internal reflexive graphs}\label{Equ C}

As briefly recalled in the Introduction, an \emph{ideal} in a category $\mathbb{C}$ is a class $\mathcal{N}$ of morphisms of $\mathbb{C}$ such that for any composite $fgh$ of three morphisms
$$\xymatrix{ W\ar[r]^-{h}\ar@/_10pt/[rrr]_-{fgh} & X\ar[r]^-{g} & Y\ar[r]^-{f} & Z}$$
if $g\in\mathcal{N}$ then $fgh\in\mathcal{N}$. An ideal can be equivalently seen as a subfunctor of the hom-functor, and hence also as an enrichment in the category $\mathsf{Set}_2$ of \emph{pairs of sets}, i.e.~the category arising from the Grothendieck construction applied to the covariant powerset functor $P:\mathsf{Set}\rightarrow\mathsf{Cat}$ (which is equivalent to the category of monomorphisms in $\mathsf{Set}$). In \cite{GraZJanUrs12}, objects of $\mathsf{Set}_2$, which are pairs $(A,B)$ where $A$ is a set and $B$ is a subset of $A$, were called \emph{multi-pointed sets}, since we can think of the subset $B$ as a \emph{set of base points} of $A$, in analogy with \emph{pointed sets} where there is always only one base point. Then, a pair $(\mathbb{C},\mathcal{N})$ where $\mathbb{C}$ is a category and $\mathcal{N}$ is an ideal of $\mathbb{C}$ is called a \emph{multi-pointed category}. Pairs of sets, in the above sense, occur naturally in homological treatment of algebraic topology which in turn motivates the study of multi-pointed categories, as it has been thoroughly clarified by the work of M.~Grandis on a non-additive generalisation of homological algebra \cite{MGra12,MGra13}. In this theory, which was first elaborated in \cite{MGra92}, multi-pointed categories enter as a natural generalisation of pointed categories. This aspect is still central in our work on ``stars'' \cite{GraZJanUrs12,GraZJanRodUrs12,GraZJanRod12}, but, at the same time, we think of multi-pointed categories \emph{also} as generalised ordinary (not necessarily pointed) categories. An ordinary category can be seen as a multi-pointed category where the ideal consists of all morphisms --- we call this the \emph{total context}. This gives only trivial examples in the realm of \cite{MGra13}, while for what we do in a multi-pointed category, the total context is as important as the \emph{pointed context} where every hom-set contains exactly one morphism from the ideal, in which case the ideal is uniquely determined and a multi-pointed category becomes a pointed category (see Remark~\ref{RemA} below).

Thinking of a multi-pointed category as a generalisation of a pointed category, in it one defines a kernel $k:K\rightarrow X$ of a morphism $f:X\rightarrow Y$ as a morphism such that $fk$ belongs to the given ideal $\mathcal{N}$, and universal with this property: if $fk'$ also belongs to $\mathcal{N}$ for some morphism $k':K'\rightarrow X$, then $k'=ku$ for a unique morphism $u:K'\rightarrow K$, as illustrated in the following display:
$$\xymatrix{ K\ar[r]^-{k} & X\ar[r]^-{f} & Y \\ K'\ar[ur]_-{k'}\ar@{..>}[u]^-{\exists! u} & & }$$
To avoid confusion when we work with several ideals, we will say \emph{$\mathcal{N}$-kernel} instead of \emph{kernel}. This notion is one of the basic notions in the above-mentioned work of M.~Grandis. In our work on multi-pointed categories, a central role is played by the following notion: given an ordered pair 
\begin{equation}\label{Equ A} \xymatrix{ X\ar@/^3pt/[r]^-{f_1}\ar@/_3pt/[r]_-{f_2} & Y }\end{equation}
of parallel morphisms, its \emph{star} is defined as an ordered pair $(f_1k,f_2k)$ where $k$ is a kernel of $f_1$. Again, when the ideal $\mathcal{N}$ is not fixed, one should replace here ``star'' and ``kernel'' with ``$\mathcal{N}$-star'' and ``$\mathcal{N}$-kernel'', respectively. The transition from a pair to its star will be usually indicated by the display 
$$\xymatrix{ K\ar[r]^-{k} & X\ar@/^3pt/[r]^-{f_1}\ar@/_3pt/[r]_-{f_2} & Y }$$
For the purposes of the present paper, we will also need the notion of a \emph{weak kernel} which is defined in the same way as the notion of a kernel, but dropping the uniqueness requirement (similarly as one usually defines weak limits). Then a \emph{weak star} of a pair $(f_1,f_2)$ is a pair $(f_1k,f_2k)$ where $k$ is a weak kernel of $f_1$. The formal definition is the following:

\begin{definition}
Let $\mathbb{C}$ be a category and let $\mathcal{N}$ be an ideal in $\mathbb{C}$. A \emph{weak kernel} (or more precisely, a \emph{weak $\mathcal{N}$-kernel}) of a morphism $f:X\rightarrow Y$ is a morphism $k:K\rightarrow X$ such that $fk$ belongs to $\mathcal{N}$, and weakly universal with this property: if $fk'$ also belongs to $\mathcal{N}$ for some morphism $k':K'\rightarrow X$, then $k'=ku$ for some morphism $u:K'\rightarrow K$ (which is not required to be unique):
$$\xymatrix{ K\ar[r]^-{k} & X\ar[r]^-{f} & Y \\ K'\ar[ur]_-{k'}\ar@{..>}[u]^-{\exists u} & & }$$
A \emph{weak star} (or more precisely, a \emph{weak $\mathcal{N}$-star}) of a pair $(f_1,f_2)$ of parallel morphisms in $\mathbb{C}$ is defined as a pair $(f_1k,f_2k)$ where $k$ is a weak $\mathcal{N}$-kernel of $f_1$.
\end{definition}

As explained in \cite{GraZJanUrs12}, the term ``star'' goes back to \cite{Bro70,Bro06} and its subsequent use, closer to our considerations, in \cite{GJan03}. 

Note that a star of a pair is at the same time a weak star, since a kernel of a morphism is at the same time a weak kernel.

\subsection*{The condition $(\ast\pi_0)$.}
Let $\mathbb{C}$ be a category and let $\mathcal{N}$ be an ideal in $\mathbb{C}$.
Consider the following condition on an ordered pair $$\xymatrix{ X\ar@/^3pt/[r]^-{f_1}\ar@/_3pt/[r]_-{f_2} & Y }$$ of parallel morphisms:
\begin{itemize}
\item[$(\ast\pi_0)$] For a weak $\mathcal{N}$-star $(f_1k,f_2k)$ of $(f_1,f_2)$, and for any morphism $g:Y\rightarrow Z$,
$$
gf_1k=gf_2k\quad\Leftrightarrow\quad gf_1=gf_2. $$
\end{itemize}

Note that when a pair does have at least one weak star, then in the condition $(\ast\pi_0)$ above ``any of its weak $\mathcal{N}$-stars'' can be equivalently replaced with ``one of its weak $\mathcal{N}$-stars''. Moreover, when the pair even has a star, then we can further equivalently replace ``any of its weak $\mathcal{N}$-stars'' in the above condition with ``any of its $\mathcal{N}$-stars'' or ``one of its $\mathcal{N}$-stars''. 

The condition $(\ast\pi_0)$ can be seen as a way of expressing that the connected components of the pair $(f_1,f_2)$, seen as an internal graph, are entirely determined by its ``star'', the subgraph $(f_1k, f_2k)$. Here ``$\ast$'' stands for ``star'' and ``$\pi_0$'' is the same as in the notation ``$\pi_0(G)$'' for the set of connected components of a graph $G$ --- so, ``$(\ast\pi_0)$'' can be read as: \emph{star determines connected components}. When either the pair $(f_1,f_2)$ or the pair $(f_1k, f_2k)$ has a coequalizer, the condition $(\ast\pi_0)$ is equivalent to coequalizers of $(f_1,f_2)$ and $(f_1k, f_2k)$ being the same.  

In \cite{GraZJanUrs12}, a \emph{star-regular category} was defined as a pair $(\mathbb{C},\mathcal{N})$ where $\mathbb{C}$ is a regular category, $\mathcal{N}$ is an ideal in $\mathbb{C}$ admitting kernels, and every regular epimorphism in $\mathbb{C}$ is a coequalizer of its \emph{kernel star}, i.e.~the star of its kernel pair. So, since a regular epimorphism is always a coequalizer of its kernel pair, and since any kernel pair in a regular category has a coequalizer, star-regularity states precisely that kernel pairs satisfy $(\ast\pi_0)$. In \cite{GraNga13} it was shown that in a star-regular category every reflexive relation satisfies $(\ast\pi_0)$. In fact, we have:

\begin{theorem}\label{The A}
In a multi-pointed category $(\mathbb{C},\mathcal{N})$ with weak kernels and weak kernel pairs, the following conditions are equivalent:
\begin{enumerate}
\item Given a reflexive graph 
$$\xymatrix{ G_1\ar@/^4pt/[r]^-{d}\ar@/_4pt/[r]_-{c} & G_0\ar[l]|-{e}, }$$
 its underlying graph, regarded as a pair $(d,c)$, satisfies $(\ast\pi_0)$.

\item Every weak kernel pair satisfies $(\ast\pi_0)$.
\end{enumerate} 
Moreover, when kernel pairs exist in $\mathbb C$, these conditions are also equivalent to the following ones:
\begin{enumerate}\setcounter{enumi}{2}
\item Every reflexive relation satisfies $(\ast\pi_0)$.
\item Every kernel pair satisfies $(\ast\pi_0)$.
\end{enumerate} 
\end{theorem}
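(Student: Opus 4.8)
The plan is to isolate a single core lemma and deduce every implication from it together with three essentially trivial observations. The trivial observations are that a weak kernel pair, a reflexive relation, and a kernel pair are all instances of reflexive graphs: a weak kernel pair $(p_1,p_2)$ of $f$ acquires a common section because the pair $(1,1)$ factors through it, a reflexive relation is by definition a reflexive graph with jointly monic legs, and a kernel pair is in particular a reflexive relation. These give at once $(1)\Rightarrow(2)$, $(1)\Rightarrow(3)$ and $(3)\Rightarrow(4)$, as well as the trivial half of each instance of $(\ast\pi_0)$ (namely $gd=gc\Rightarrow gdk=gck$, by right composition with $k$). So the real work is the converse half, and I would package it as the following lemma: \emph{if the weak kernel pair of the first leg $d$ of a reflexive graph $(d,c,e)$ satisfies $(\ast\pi_0)$, then so does the graph $(d,c)$.} Feeding this lemma the weak kernel pair of $d$ yields $(2)\Rightarrow(1)$, and --- when kernel pairs exist --- feeding it the honest kernel pair of $d$ (which is a fortiori a weak kernel pair) yields $(4)\Rightarrow(1)$; the cycle $(1)\Rightarrow(3)\Rightarrow(4)\Rightarrow(1)$ together with $(1)\Leftrightarrow(2)$ then closes everything.

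To prove the core lemma, I would fix a reflexive graph $(d,c,e)$ with $de=ce=1_{G_0}$, a weak $\mathcal N$-kernel $k\colon K\to G_1$ of $d$, and a morphism $g\colon G_0\to Z$ with $gdk=gck$; the goal is $gd=gc$. I would form the weak kernel pair $p_1,p_2\colon P\to G_1$ of $d$ and a weak $\mathcal N$-kernel $j\colon J\to P$ of $p_1$, so that $(p_1j,p_2j)$ is a weak star of $(p_1,p_2)$. Since $p_1j\in\mathcal N$ and $\mathcal N$ is an ideal, $d(p_1j)\in\mathcal N$ and $d(p_2j)=d(p_1j)\in\mathcal N$, so both $p_1j$ and $p_2j$ factor through the weak kernel $k$ of $d$, say $p_1j=ks$ and $p_2j=kt$. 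Using the hypothesis $gdk=gck$ (composed on the right with $s$ and with $t$) I would compute $gcp_1j=gdks=gdp_1j$ and $gcp_2j=gdkt=gdp_2j=gdp_1j$, whence $gcp_1j=gcp_2j$.

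This last equation is exactly the left-hand side of $(\ast\pi_0)$ for the pair $(p_1,p_2)$ evaluated at the test morphism $m=gc$; since the weak kernel pair of $d$ satisfies $(\ast\pi_0)$ by hypothesis, I obtain $gcp_1=gcp_2$. The final step exploits reflexivity: because $d\,1_{G_1}=d=d(ed)$, the pair $(1_{G_1},ed)$ factors through the weak kernel pair, giving $\sigma\colon G_1\to P$ with $p_1\sigma=1_{G_1}$ and $p_2\sigma=ed$; composing $gcp_1=gcp_2$ with $\sigma$ and using $ce=1_{G_0}$ then yields $gc=gcp_2\sigma=gced=gd$, as required.

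The main obstacle --- and the place where the argument genuinely departs from the pointed case --- is the derivation of the equality $gcp_1j=gcp_2j$. In the pointed context both sides lie in $\mathcal N$ and hence coincide automatically, since each hom-set meets $\mathcal N$ in a single morphism; but in a general multi-pointed category membership in $\mathcal N$ is far from forcing equality. Obtaining a genuine equation therefore requires routing both $p_1j$ and $p_2j$ through the \emph{same} weak kernel $k$ of $d$ and then invoking the hypothesis $gdk=gck$, and it is this, rather than any coequalizer or effectiveness hypothesis, that makes the weak kernel pair of $d$ (as opposed to, say, that of $g$) the correct object to introduce. Everything else is formal manipulation of weak universal properties, which goes through verbatim whether $P$ is taken to be a weak kernel pair or a genuine one, so the single lemma simultaneously delivers $(2)\Rightarrow(1)$ and $(4)\Rightarrow(1)$.
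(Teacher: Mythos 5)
Your proposal is correct, and its heart --- the core lemma and its proof --- is, up to notation (your $p_1,p_2,j,s,t,\sigma,g$ are the paper's $u,v,k',u',v',s,l$), exactly the argument the paper gives for the implication from condition (b) to condition (a): form a weak kernel pair of $d$, take a weak kernel of its first projection, factor the two composites through the weak kernel $k$ of $d$, derive $gcp_1j=gcp_2j$ from the hypothesis $gdk=gck$, invoke $(\ast\pi_0)$ for the weak kernel pair with test morphism $gc$, and finish using the section $\sigma$ supplied by reflexivity (the paper phrases this last step by saying that $d$ is a split coequalizer of its weak kernel pair, but the computation $gc=gcp_1\sigma=gcp_2\sigma=gced=gd$ is the same). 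Where you genuinely depart from the paper is in closing the second part of the theorem. The paper proves (a)$\Rightarrow$(c)$\Rightarrow$(d) and then (d)$\Rightarrow$(b), the latter resting on a separate observation: the canonical comparison from any weak kernel pair to the honest kernel pair of the same morphism is a split epimorphism, along which $(\ast\pi_0)$ transfers. You bypass that step entirely: since your lemma only requires that \emph{some} weak kernel pair of $d$ satisfy $(\ast\pi_0)$, you may feed it the honest kernel pair (a fortiori a weak kernel pair), obtaining (d)$\Rightarrow$(a) directly and closing the cycle (a)$\Rightarrow$(c)$\Rightarrow$(d)$\Rightarrow$(a). This is a slightly cleaner decomposition --- one lemma serves both implications and no transfer-along-split-epimorphisms argument is needed --- whereas the paper's route isolates that transfer fact, which has some independent interest but is left to the reader as ``easy'' and is the only part of the paper's proof you did not have to reproduce.
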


\begin{proof}
A weak kernel pair is, in particular, a reflexive graph, so (a)$\Rightarrow$(b) is obvious. For (b)$\Rightarrow$(a) we adapt the argument given in \cite{GraNga13} to the present context: consider the display
\begin{equation}\label{Equ B}\xymatrix{ K\ar[r]^-{k} & G_1\ar@/^4pt/[r]^-{d}\ar@/_4pt/[r]_-{c} & G_0\ar[l]|-{e}\ar[r]^-{l} & L }\end{equation}
where $k$ is a weak kernel of $d$, while $de=ce=1_{G_0}$ and $l$ is any morphism. We must show
$$ldk=lck\quad\Rightarrow\quad ld=lc$$
(for, the converse implication is obvious). So we suppose $ldk=lck$. Consider the following similar chain
$$\xymatrix{ K'\ar[r]^-{k'} & P\ar@/^4pt/[r]^-{u}\ar@/_4pt/[r]_-{v} & G_1\ar[l]|-{i}\ar[r]^-{d} & G_0 }$$
where $(u,v)$ is a weak kernel pair of $d$ and hence admits a morphism $i$ as displayed above, such that $ui=vi=1_{G_1}$; similarly as before, $k'$ is a weak kernel of $u$. Now, we have $$dvk'=duk'\in\mathcal{N}.$$
This gives rise to morphisms $u',v'$ as in the display
$$\xymatrix@=50pt{ K'\ar@<+2pt>[rd]^-{uk'}\ar@<-2pt>[rd]_-{vk'}\ar@<+2pt>[d]^-{u'}\ar@<-2pt>[d]_-{v'} & & & \\ K\ar[r]^-{k} & G_1\ar@/^7pt/[r]^-{d}\ar@/_7pt/[r]_-{c} & G_0\ar[l]|-{e}\ar[r]^-{l} & L }$$
with $kv'=vk'$ and $ku'=uk'$. To be able to deduce $ld=lc$ we need one more arrow. Since $ded=d$, we get an induced morphism $s$ into the domain of the weak kernel pair $(v, u)$ of $d$, as in the display
$$\xymatrix{ G_1\ar@/_5pt/[ddr]_-{ed}\ar@/^5pt/[drr]^-{1_{G_1}}\ar[dr]|-{s} & & \\ & P\ar[d]_-{v}\ar[r]^-{u} & G_1\ar[d]^-{d} \\ & G_1\ar[r]_-{d} & G_0 }$$
This morphism, together with $e$ and $i$, shows that $d$ is a split coequalizer of its weak kernel pair $(u,v)$. In particular, this implies that if we can show $lcu=lcv$ then we will have $lc=ld$ --- indeed it does, as then $lc=lcus=lcvs=lced=ld$. Since the pair $(u,v)$ satisfies $(\ast\pi_0)$ by assumption, to get $lcu=lcv$ it is sufficient to show $lcuk'=lcvk'$. Indeed,
$$ lcuk' = lcku' = ldku' = lduk' = ldvk' = ldkv' = lckv' = lcvk'.$$
This shows (b)$\Rightarrow$(a). For the second part of the theorem, suppose that $\mathbb{C}$ has kernel pairs. We have obvious implications (a)$\Rightarrow$(c)$\Rightarrow$(d). The implication (d)$\Rightarrow$(b) follows easily from the fact that the canonical morphism from a weak kernel pair of some morphism in $\mathbb C$ to the kernel pair of the same morphism is a split epimorphism.
\end{proof}

In the total context, i.e.~when $\mathcal{N}$ is the class of all morphisms in a category $\mathbb{C}$, any pair trivially satisfies $(\ast\pi_0)$, and so all conditions in Theorem~\ref{The A} hold trivially (thus in this case the assumption that weak or strict kernel pairs must exist is redundant). In the pointed context, when in addition $\mathbb{C}$ is a regular category, the result above gives a new characterisation of normal categories in the sense of \cite{ZJan10}. More generally, we have:

\begin{corollary}\label{Cor A}
For a multi-pointed category $(\mathbb{C},\mathcal{N})$ having kernel pairs and kernels, and having coequalizers of kernel pairs, every regular epimorphism is a coequalizer of its kernel star if and only if every reflexive graph in $\mathbb{C}$ satisfies $(\ast\pi_0)$. In particular, this gives that a regular multi-pointed category is a star-regular category if and only if every internal reflexive graph in it satisfies $(\ast\pi_0)$.
\end{corollary}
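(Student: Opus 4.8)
The plan is to reduce the first, main equivalence of the corollary to the equivalence (a)$\Leftrightarrow$(d) of Theorem~\ref{The A}, and then obtain the ``in particular'' clause by specialising $\mathbb{C}$ to a regular category. Since $\mathbb{C}$ has kernel pairs and kernels, and every kernel (resp.\ kernel pair) is in particular a weak kernel (resp.\ weak kernel pair), Theorem~\ref{The A} applies and tells us that condition (d) --- every kernel pair satisfies $(\ast\pi_0)$ --- is equivalent to condition (a) --- every reflexive graph satisfies $(\ast\pi_0)$. So it suffices to prove that \emph{every regular epimorphism is a coequalizer of its kernel star} if and only if \emph{every kernel pair satisfies $(\ast\pi_0)$}.

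The bridge between these two formulations is the observation, recorded before Theorem~\ref{The A}, that for a pair $(f_1,f_2)$ one of whose two members (the pair itself or its star $(f_1k,f_2k)$) has a coequalizer, $(\ast\pi_0)$ holds precisely when the coequalizers of $(f_1,f_2)$ and of $(f_1k,f_2k)$ coincide. The key supporting fact I would isolate is that the class of kernel pairs coincides, up to isomorphism, with the class of kernel pairs of regular epimorphisms. One inclusion is standard: a regular epimorphism $f$ is the coequalizer of its own kernel pair. For the other, given a kernel pair $(p,q)$ --- say of $h\colon X\to W$ --- its coequalizer $f$ exists by hypothesis, and I claim $(p,q)$ is the kernel pair of $f$. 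Indeed, $f$ coequalizes $(p,q)$, so $(p,q)$ factors through the kernel pair $(p',q')$ of $f$; conversely, writing $h=\bar h f$, we get $hp'=\bar h f p'=\bar h f q'=hq'$, so $(p',q')$ factors through $(p,q)$; joint monicity of both pairs then forces the two comparison morphisms to be mutually inverse.

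With this in hand the equivalence is immediate. If every regular epimorphism is a coequalizer of its kernel star, then for any kernel pair $(p,q)$ its coequalizer $f$ is a regular epimorphism whose kernel pair is $(p,q)$, so $f$ is simultaneously the coequalizer of $(p,q)$ and --- by hypothesis --- of the kernel star $(pk,qk)$; the observation above then yields $(\ast\pi_0)$ for $(p,q)$. Conversely, if every kernel pair satisfies $(\ast\pi_0)$, then a regular epimorphism $f$ is the coequalizer of its kernel pair $(p,q)$, and since $(p,q)$ satisfies $(\ast\pi_0)$ and admits the coequalizer $f$, the observation gives that $f$ is also the coequalizer of the kernel star, i.e.\ $f$ is a coequalizer of its kernel star. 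Finally, for the ``in particular'' clause, a regular category automatically has kernel pairs and coequalizers of kernel pairs, while star-regularity requires $\mathcal{N}$ to admit kernels; thus the hypotheses of the first part are met, and star-regularity --- which is exactly the condition that every regular epimorphism be a coequalizer of its kernel star --- is equivalent to every internal reflexive graph satisfying $(\ast\pi_0)$.

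The only genuinely non-formal step I anticipate is the supporting fact that an arbitrary kernel pair is recovered as the kernel pair of its coequalizer; everything else is bookkeeping with the two remarks preceding Theorem~\ref{The A} together with an application of the theorem itself. I would take care to state that claim cleanly, since it is precisely what guarantees that quantifying over kernel pairs of regular epimorphisms is the same as quantifying over all kernel pairs.
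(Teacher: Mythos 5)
Your proposal is correct and follows essentially the same route as the paper: the paper's own proof simply invokes Theorem~\ref{The A} together with the (unproved) ``fact'' that, given kernel pairs, kernels, and coequalizers of kernel pairs, condition \ref{The A}(d) is equivalent to every regular epimorphism being a coequalizer of its kernel star. Your only addition is to spell out that bridging fact --- via the observation that every kernel pair is, up to isomorphism, the kernel pair of its own coequalizer --- which the paper leaves implicit.
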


\begin{proof}
This follows from Theorem~\ref{The A} and the fact that in a multi-pointed category, under the existence of kernel pairs, kernels, and coequalizers of kernel pairs, \ref{The A}(d) is equivalent to every regular epimorphism being a coequalizer of its kernel star.  
\end{proof}

Similarly, we have:

\begin{corollary}\label{Cor D}
For a multi-pointed category $(\mathbb{P},\mathcal{N})$ having weak kernel pairs and weak kernels, and having coequalizers of weak kernel pairs, every regular epimorphism is a coequalizer of its weak kernel star (i.e.~weak star of its weak kernel pair) if and only if every reflexive graph in $\mathbb{P}$ satisfies $(\ast\pi_0)$.
\end{corollary}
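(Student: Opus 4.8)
The plan is to prove Corollary~\ref{Cor D} by the same two-stage strategy used in Corollary~\ref{Cor A}, but replacing every strict (co)limit notion by its weak counterpart. Corollary~\ref{Cor D} is not about a star-regular structure with strict kernel pairs, so I cannot invoke \ref{The A}(d); instead the relevant conditions are (a) and (b) of Theorem~\ref{The A}, both of which are stated for a multi-pointed category with weak kernels and weak kernel pairs --- precisely the hypotheses assumed here on $(\mathbb{P},\mathcal{N})$. So the structural part of the argument should be: reduce the ``coequalizer of the weak kernel star'' condition to condition (b) of Theorem~\ref{The A}, then invoke the equivalence (a)$\Leftrightarrow$(b) to land on condition (a), which is exactly ``every reflexive graph satisfies $(\ast\pi_0)$''.

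First I would unwind what ``every regular epimorphism is a coequalizer of its weak kernel star'' means. Let $q$ be a regular epimorphism, let $(u,v)$ be a weak kernel pair of $q$, and let $k$ be a weak kernel of $u$, so the weak kernel star is $(uk,vk)$. Since $q$ is a regular epimorphism it is already the coequalizer of some parallel pair, and I would want to exhibit it as the coequalizer of $(u,v)$ first. The key point is that in the presence of weak kernel pairs, $q$ coequalizes $(u,v)$ and does so universally among morphisms that coequalize $(u,v)$: indeed any weak kernel pair of $q$ surjects (via a split epimorphism, by weak universality against the strict/weak comparison) onto whatever the ``true'' equivalence data is, so morphisms coequalizing $(u,v)$ are exactly morphisms factoring through $q$. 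Granting that $q=\mathrm{coeq}(u,v)$, the statement ``$q$ is the coequalizer of its weak kernel star $(uk,vk)$'' becomes, via the remark in the text that $(\ast\pi_0)$ for a pair is equivalent to the coequalizers of the pair and of its star coinciding, precisely the assertion that the weak kernel pair $(u,v)$ satisfies $(\ast\pi_0)$.

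Hence ``every regular epimorphism is a coequalizer of its weak kernel star'' is equivalent to ``every weak kernel pair satisfies $(\ast\pi_0)$'', which is condition (b) of Theorem~\ref{The A}. Applying the already-proven equivalence (a)$\Leftrightarrow$(b) then gives that this holds if and only if every reflexive graph in $\mathbb{P}$ satisfies $(\ast\pi_0)$, which is the desired conclusion. I would present this as: the corollary follows from Theorem~\ref{The A} and the fact that, under the existence of weak kernel pairs, weak kernels, and coequalizers of weak kernel pairs, condition \ref{The A}(b) is equivalent to every regular epimorphism being a coequalizer of its weak kernel star --- mirroring verbatim the proof of Corollary~\ref{Cor A} with ``kernel'' replaced by ``weak kernel''.

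The main obstacle I anticipate is the bookkeeping in the equivalence between (b) and the coequalizer formulation, specifically verifying that a regular epimorphism $q$ really is the coequalizer of its \emph{weak} kernel pair $(u,v)$, rather than only of its strict kernel pair. This is where the split-epimorphism comparison morphism from the weak kernel pair to the strict kernel pair (used in the proof of (d)$\Rightarrow$(b) in Theorem~\ref{The A}) is needed: because that comparison is a split epimorphism, a morphism coequalizes the weak kernel pair if and only if it coequalizes the strict kernel pair, so the two have the same coequalizer, namely $q$. Once this identification is in place, the rest is a direct transcription of the proof of Corollary~\ref{Cor A}, and I do not expect any genuinely new difficulty, since all the weak-limit hypotheses needed by Theorem~\ref{The A} are assumed in the statement.
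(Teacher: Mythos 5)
Your overall strategy --- reduce the coequalizer condition to Theorem~\ref{The A}(b) and then invoke the equivalence (a)$\Leftrightarrow$(b) --- is exactly the route the paper intends (Corollary~\ref{Cor D} is stated without proof, as the ``weak'' analogue of Corollary~\ref{Cor A}). But the step you yourself single out as the crux is justified by a tool that does not exist under the hypotheses of Corollary~\ref{Cor D}: you prove that a regular epimorphism $q$ is the coequalizer of its weak kernel pair by comparing the weak kernel pair with the \emph{strict} kernel pair, via the split-epimorphism comparison used in the proof of (d)$\Rightarrow$(b) of Theorem~\ref{The A}. Corollary~\ref{Cor D} assumes only \emph{weak} kernel pairs --- that is the whole reason it is stated separately from Corollary~\ref{Cor A}, whose proof does go through the strict condition \ref{The A}(d) --- so the strict kernel pair, and hence your comparison morphism, need not exist in $\mathbb{P}$ (think of a category of free algebras). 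Fortunately the gap is repaired by a direct argument needing no strict limits at all: since $q$ is a regular epimorphism, $q=\mathrm{coeq}(f,g)$ for \emph{some} pair $(f,g)$; since $qf=qg$, the weak universal property of a weak kernel pair $(u,v)$ of $q$ yields $w$ with $uw=f$ and $vw=g$; now if $hu=hv$ then $hf=huw=hvw=hg$, so $h$ factors through $q$, uniquely because $q$ is an epimorphism; together with $qu=qv$ this gives $q=\mathrm{coeq}(u,v)$.

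There is a second, smaller gap: as written, your reduction identifies the coequalizer condition with ``$(\ast\pi_0)$ holds for weak kernel pairs \emph{of regular epimorphisms}'', whereas condition \ref{The A}(b) concerns weak kernel pairs of arbitrary morphisms, and you need the full condition (b) to cite Theorem~\ref{The A} as a black box. The bridge is this: given a weak kernel pair $(u,v)$ of an arbitrary morphism $f$, form $q=\mathrm{coeq}(u,v)$, which exists by hypothesis; since $fu=fv$, the morphism $f$ factors as $f=mq$, and then $qa=qb$ implies $fa=fb$, so any pair coequalized by $q$ factors through $(u,v)$; hence $(u,v)$ is also a weak kernel pair of the regular epimorphism $q$, to which your hypothesis applies, and the remark relating $(\ast\pi_0)$ to coincidence of coequalizers then gives $(\ast\pi_0)$ for $(u,v)$. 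With these two repairs your argument is complete and coincides with the intended proof.
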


\section{Ideals and projective covers}\label{SecA}

Recall from \cite{CarVit98} that a \emph{projective cover} $\mathbb{P}$ of a regular category $\mathbb{C}$ is a full subcategory $\mathbb{P}$ of $\mathbb{C}$ such that:
\begin{itemize}
\item Every object $P$ in $\mathbb{P}$ is (regular) projective in $\mathbb{C}$, i.e.~the functor $$\mathsf{hom}(P,-):\mathbb{C}\rightarrow\mathbf{Set}$$ preserves regular epimorphisms;

\item For every object $C$ in $\mathbb{C}$ there exists a regular epimorphism $P\rightarrow C$ where $P\in\mathbb{P}$.
\end{itemize}
Below we study the question of extending ideals of morphisms from $\mathbb{P}$ to $\mathbb{C}$. Ideals on $\mathbb{C}$ arising in this way will have a simple characteristic property. 

Let $\mathbb{C}$ be a regular category and let $\mathcal{N}$ be an ideal in $\mathbb{C}$. A morphism $f:P\rightarrow Y$ in $\mathbb{C}$ is said to be \emph{$\mathcal{N}$-saturating} (or simply \emph{saturating}, when the ideal has been fixed)  when for any morphism $n:X\rightarrow Y$ from $\mathcal{N}$ there is a commutative square
$$\xymatrix{ Z\ar@{->>}[d]_-{g}\ar[r]^-{m} & P\ar[d]^-{f} \\ X\ar[r]_-{n} & Y }$$  
with $g$ a regular epimorphism and $m\in\mathcal{N}$; we say in this case that $f$ \emph{saturates} $n$. When $\mathcal{N}$ admits kernels, this becomes the same notion as the one used in \cite{GraZJanRodUrs12}. In the total context saturating morphisms are precisely the regular epimorphisms, while in the pointed context all morphisms are saturating. 

Let $\mathbb{C}$ be a regular category and let $\mathbb{P}$ be a projective cover of $\mathbb{C}$. Let $\mathfrak{I}(\mathbb{C})$ and $\mathfrak{I}(\mathbb{P})$ denote the ordered sets of ideals (ordered under inclusion of ideals) of $\mathbb{C}$ and $\mathbb{P}$, respectively. Note that ideals are always closed under arbitrary unions and intersections, and so both $\mathfrak{I}(\mathbb{C})$ and $\mathfrak{I}(\mathbb{P})$ are complete lattices. In general, there are several order-preserving maps between them. In particular, consider the maps
$$\mathfrak{I}(\mathbb{C})\rightarrow \mathfrak{I}(\mathbb{P}),\quad \mathcal{N}\mapsto\mathcal{N}_\mathbb{P}\quad\textrm{(restriction map)}$$
$$\mathfrak{I}(\mathbb{P})\rightarrow \mathfrak{I}(\mathbb{C}),\quad \mathcal{N}\mapsto\mathcal{N}^\mathbb{C}\quad\textrm{(extension map)}$$
where
\begin{itemize}
\item for an ideal $\mathcal{N}$ of morphisms in $\mathbb{C}$, by $\mathcal{N}_\mathbb{P}$ we denote the restriction of $\mathcal{N}$ on $\mathbb{P}$, i.e.~
$$\mathcal{N}_\mathbb{P}=\mathcal{N}\cap\mathbb{P}_1$$
where $\mathbb{P}_1$ denotes the class of all morphisms in $\mathbb{P}$; 

\item for an ideal $\mathcal{N}$ of morphisms in $\mathbb{P}$, by $\mathcal{N}^\mathbb{C}$ we denote the class of morphisms in $\mathbb{C}$ consisting of those morphisms $f:X\rightarrow Y$ in  $\mathbb{C}$ for which there is a commutative square
$$\xymatrix{ P\ar[r]^-{n}\ar@{->>}[d]_-{e} & P'\ar@{->>}[d]^-{e'} \\ X\ar[r]_-{f} & Y }$$ where $e$ and $e'$ are regular epimorphisms and $n\in\mathcal{N}$.
\end{itemize}
The restriction map preserves arbitrary joins and meets, and so has both left and right adjoints. The extension map preserves arbitrary joins and so has a right adjoint. In general the restriction and the extension maps are not adjoints to each other, as the next example shows.
\begin{example}
Let $\mathbb{C}=\mathbf{Grp}$ be the category of groups and let $\mathbb{P}$ be its projective cover whose objects are the free groups. Consider the ideal $\mathcal{N}$ in  $\mathbf{Grp}$ consisting of those morphisms which either factor through the group $\mathbb{Z}$ of integers, or through $\mathbb{Z}_2+\mathbb{Z}_2$ (where $\mathbb{Z}_2$ is the group $\mathbb{Z}/2\mathbb{Z}$). Then, 
\begin{itemize}
\item the identity morphism 
$\mathbb{Z}_3\rightarrow\mathbb{Z}_3$ belongs to $(\mathcal{N}_\mathbb{P})^\mathbb{C}$ since it is part of the commutative diagram
$$\xymatrix{ \mathbb{Z}\ar[r]^-{1_\mathbb{Z}}\ar@{->>}[d] & \mathbb{Z}\ar@{->>}[d] \\ \mathbb{Z}_3\ar[r]_-{1_{\mathbb{Z}_3}} & \mathbb{Z}_3 }$$
where the top arrow belongs to $\mathcal{N}_\mathbb{P}$, but it cannot belong to $\mathcal{N}$ since the only morphism $\mathbb{Z}_3\rightarrow\mathbb{Z}$ is the trivial one, and at the same time the only morphism $\mathbb{Z}_2+\mathbb{Z}_2\rightarrow\mathbb{Z}_3$ is also the trivial one;

\item the identity morphism $\mathbb{Z}_2+\mathbb{Z}_2\rightarrow\mathbb{Z}_2+\mathbb{Z}_2$ belongs to $\mathcal{N}$, but it does not belong to $(\mathcal{N}_\mathbb{P})^\mathbb{C}$ since in any commutative diagram
$$\xymatrix{ G\ar[r]^-{n}\ar@{->>}[d] & H\ar@{->>}[d] \\ \mathbb{Z}_2+\mathbb{Z}_2\ar[r]_-{1_{\mathbb{Z}_2+\mathbb{Z}_2}} & \mathbb{Z}_2+\mathbb{Z}_2 }$$
where $G$ and $H$ are free groups, the morphism $n$ can neither factor through $\mathbb{Z}$ (if it did, then we would get that there is a surjective homomorphism from $\mathbb{Z}$ to $\mathbb{Z}_2+\mathbb{Z}_2$ which is clearly false), and nor it can factor through $\mathbb{Z}_2+\mathbb{Z}_2$ since any such morphism would be a trivial homomorphism between the free groups (and then it could not make the diagram above commute since the bottom arrow is not trivial and the left vertical arrow is a surjective homomorphism).  
\end{itemize}
Thus, in general, it is neither true that $(\mathcal{N}_\mathbb{P})^\mathbb{C}\subseteq\mathcal{N}$ nor that $\mathcal{N}\subseteq(\mathcal{N}_\mathbb{P})^\mathbb{C}$: this implies that the two constructions $\mathcal{N}_\mathbb{P}$ and $\mathcal{N}^\mathbb{C}$ cannot be adjoint to each other. 
\end{example}

\begin{lemma}\label{Lem A} For any regular category $\mathbb{C}$ having a projective cover $\mathbb{P}$ we have:
\begin{enumerate}
\item $\mathcal{N}=(\mathcal{N}^\mathbb{C})_\mathbb{P}$ for any ideal $\mathcal{N}$ in $\mathbb{P}$.

\item For any ideal $\mathcal{N}$ in $\mathbb{P}$, the category $\mathbb{P}$ has weak $\mathcal{N}$-kernels if and only if $\mathbb{C}$ has $\mathcal{N}^\mathbb{C}$-kernels.

\item For any ideal $\mathcal{N}$ in $\mathbb{C}$, if $\mathbb{C}$ has $\mathcal{N}$-kernels then $\mathbb{P}$ has weak $\mathcal{N}_\mathbb{P}$-kernels and, moreover, in this case we have $(\mathcal{N}_\mathbb{P})^\mathbb{C}\subseteq\mathcal{N}$.

\item For any ideal $\mathcal{N}$ in $\mathbb{C}$, we have $\mathcal{N}\subseteq(\mathcal{N}_\mathbb{P})^\mathbb{C}$ if and only if regular epimorphisms are $\mathcal{N}$-saturating in $\mathbb{C}$.

\item For any ideal $\mathcal{N}$ in $\mathbb{P}$, regular epimorphisms are $\mathcal{N}^\mathbb{C}$-saturating in $\mathbb{C}$.
\end{enumerate} 
\end{lemma}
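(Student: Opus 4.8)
The plan is to exploit two features of a projective cover throughout, and to reduce most of the work to them. First, a regular epimorphism with domain in $\mathbb{P}$ and codomain in $\mathbb{P}$ splits, and a morphism of $\mathbb{C}$ out of an object of $\mathbb{P}$ lifts along any regular epimorphism (projectivity). Second, ideals absorb arbitrary composition, and $\mathbb{C}$ admits image factorisations. Combined, these settle (a), (c), (d), (e) and one implication of (b). For (a), the inclusion $\mathcal{N}\subseteq(\mathcal{N}^{\mathbb{C}})_{\mathbb{P}}$ is immediate, since $n\in\mathcal{N}$ sits in the defining square with identity legs. For the reverse inclusion I take a $\mathbb{P}$-morphism $f\colon P\to P'$ lying in $\mathcal{N}^{\mathbb{C}}$, witnessed by $fe=e'n$ with $e,e'$ regular epimorphisms and $n\in\mathcal{N}$, split $e$ by projectivity of $P$ to get $s$ with $es=1_P$, and read off $f=e'ns$, which lies in $\mathcal{N}$ because $e'$ and $s$ are morphisms of $\mathbb{P}$ and $\mathcal{N}$ is an ideal in $\mathbb{P}$.

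For (c) I would first record a fact used repeatedly: a strict $\mathcal{N}$-kernel $k$ is always a monomorphism, since if $ka=kb$ then, as $f(ka)=(fk)a\in\mathcal{N}$, both $a$ and $b$ are factorisations of $ka$ through $k$, hence equal by the uniqueness clause. Given $f\colon P\to P'$ in $\mathbb{P}$ with $\mathcal{N}$-kernel $k\colon K\to P$ in $\mathbb{C}$, I cover $K$ by a projective $p\colon\hat{K}\to K$ and check that $kp$ is a weak $\mathcal{N}_{\mathbb{P}}$-kernel of $f$: it lies in $\mathcal{N}_{\mathbb{P}}$ by absorption, and any $k'$ in $\mathbb{P}$ with $fk'\in\mathcal{N}_{\mathbb{P}}$ factors through $k$ in $\mathbb{C}$ and then lifts along $p$ by projectivity of the domain of $k'$. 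For the ``moreover'', given $f\in(\mathcal{N}_{\mathbb{P}})^{\mathbb{C}}$ with $fe=e'n$, $n\in\mathcal{N}_{\mathbb{P}}\subseteq\mathcal{N}$, I get $fe\in\mathcal{N}$; hence $e$ factors as $ku$ through the $\mathcal{N}$-kernel $k$ of $f$, so $k$ is a strong (hence regular) epimorphism, and being also a monomorphism it is an isomorphism, giving $f=(fk)k^{-1}\in\mathcal{N}$.

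Statements (d) and (e) are unfoldings of the saturating condition. For (e) I note $\mathcal{N}\subseteq\mathcal{N}^{\mathbb{C}}$ (identity legs); then, given a regular epimorphism $q$ and $n\in\mathcal{N}^{\mathbb{C}}$ with witnessing top $n_0\in\mathcal{N}$, I lift the codomain leg $e'$ along $q$ by projectivity and post-compose, the resulting morphism lying in $\mathcal{N}^{\mathbb{C}}$ by the ideal property. For (d), the forward direction runs the same lift-along-$q$ argument, now using $\mathcal{N}\subseteq(\mathcal{N}_{\mathbb{P}})^{\mathbb{C}}$ to supply the $\mathcal{N}_{\mathbb{P}}$-top of the square; conversely, saturation of a projective cover $e'\colon P'\to Y$ applied to $n\in\mathcal{N}$ yields a square which, after covering its upper-left corner by a projective, exhibits $n\in(\mathcal{N}_{\mathbb{P}})^{\mathbb{C}}$.

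Finally (b). The implication from ``$\mathbb{C}$ has $\mathcal{N}^{\mathbb{C}}$-kernels'' to ``$\mathbb{P}$ has weak $\mathcal{N}$-kernels'' I would deduce with no extra work by applying (c) to the $\mathbb{C}$-ideal $\mathcal{N}^{\mathbb{C}}$ and using $(\mathcal{N}^{\mathbb{C}})_{\mathbb{P}}=\mathcal{N}$ from (a). The reverse implication is the main obstacle. The natural attempt is to present $f\colon X\to Y$ by regular epimorphisms $e\colon P\to X$, $e'\colon P'\to Y$ with a lift $\tilde{f}\colon P\to P'$, take a weak $\mathcal{N}$-kernel of $\tilde{f}$ in $\mathbb{P}$, and define the $\mathcal{N}^{\mathbb{C}}$-kernel of $f$ as the image in $\mathbb{C}$ of its composite with $e$; the image is monic, so uniqueness in the universal property is automatic, and membership of the composite in $\mathcal{N}^{\mathbb{C}}$ is direct. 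The difficulty is \emph{existence} of the factorisation: the weak $\mathcal{N}$-kernel of $\tilde{f}$ only detects maps $t$ with $\tilde{f}t\in\mathcal{N}$, whereas $f$ annihilates $t$ as soon as $e'\tilde{f}t\in\mathcal{N}^{\mathbb{C}}$, that is, as soon as $\tilde{f}t$ lands in $\ker e'$ up to $\mathcal{N}$. Thus the construction must incorporate the kernel of the codomain cover $e'$ and descend the factorisation along $e$ and $e'$, all while using weak $\mathcal{N}$-kernels alone, since that is the only structure the hypothesis provides. I expect this descent, reconciling the weak $\mathcal{N}$-kernel of $\tilde{f}$ with $\ker e'$ via a diagonal fill-in against the covering regular epimorphisms, to be the technical heart of the argument.
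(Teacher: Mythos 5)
Your treatments of (a), (c), (d), (e), and of the implication from ``$\mathbb{C}$ has $\mathcal{N}^\mathbb{C}$-kernels'' to ``$\mathbb{P}$ has weak $\mathcal{N}$-kernels'' are correct and essentially the paper's own arguments (deducing that last implication from (c) together with (a) is a tidy shortcut; the paper instead repeats the covering construction). The gap is exactly where you locate it: the forward direction of (b) is never proved. You describe the obstruction --- a weak $\mathcal{N}$-kernel of an arbitrary lift $\tilde{f}\colon P\to P'$ of $f$ only detects test maps $t$ with $\tilde{f}t\in\mathcal{N}$, whereas an $\mathcal{N}^\mathbb{C}$-kernel of $f$ must absorb every $t$ with $ft\in\mathcal{N}^\mathbb{C}$ --- but announcing that the needed descent ``is the technical heart of the argument'' is not a proof, and this direction is precisely the part of the lemma that carries weight later: it is what places $\mathcal{N}^\mathbb{C}$ in $\mathfrak{I}_\mathsf{k}(\mathbb{C})$ for the Galois connections and for Corollary~\ref{Cor C}.

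The missing idea is that the lift of $f$ must not be arbitrary: it should be created by pulling back a projective cover of the \emph{codomain} along $f$. Given $f\colon B\to C$, take a regular epimorphism $r\colon R\twoheadrightarrow C$ with $R\in\mathbb{P}$, form the pullback $B\times_C R$ with projections $p_1,p_2$, cover it by a regular epimorphism $q\colon Q\twoheadrightarrow B\times_C R$ with $Q\in\mathbb{P}$, let $k$ be a weak $\mathcal{N}$-kernel of $p_2q\colon Q\to R$ in $\mathbb{P}$, and let $p_1qk=me$ be the (regular epi, mono) factorisation; then $m$ is an $\mathcal{N}^\mathbb{C}$-kernel of $f$. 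The pullback is what converts ``annihilation up to $\mathcal{N}^\mathbb{C}$'' into literal annihilation by $\mathcal{N}$ inside $\mathbb{P}$: if $g\colon T\to B$ has $fg\in\mathcal{N}^\mathbb{C}$, witnessed by $fge''=e'''n$ with $e'',e'''$ regular epimorphisms out of objects of $\mathbb{P}$ and $n\in\mathcal{N}$, then projectivity of the domain of $e'''$ gives $h$ with $rh=e'''$, so $r(hn)=fge''$ and the pair $(ge'',hn)$ induces a morphism into $B\times_C R$; lifting it through $q$ (projectivity of the domain of $e''$) yields $w'$ with $p_2qw'=hn\in\mathcal{N}$, hence $w'$ factors through $k$, hence $ge''=p_1qw'$ factors through $m$, and the diagonal fill-in of the regular epimorphism $e''$ against the monomorphism $m$ factors $g$ itself through $m$. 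Uniqueness is free since $m$ is monic, and $fm\in\mathcal{N}^\mathbb{C}$ is witnessed by the outer square with top edge $p_2qk\in\mathcal{N}$ and vertical edges $e$ and $r$. This is the construction in the paper's proof (which in turn leaves the verification just sketched as a ``routine argument''), and it is the step your proposal stops short of supplying.
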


\begin{proof}
(a): It is easy to see that $\mathcal{N}\subseteq\mathcal{N}^\mathbb{C}$. Since obviously $\mathcal{N}\subseteq\mathbb{P}_1$ (where $\mathbb{P}_1$ is the class of all morphisms in $\mathbb{P}$, as before), we get $\mathcal{N}\subseteq \mathcal{N}^\mathbb{C}\cap \mathbb{P}_1=(\mathcal{N}^\mathbb{C})_\mathbb{P}$.

Suppose a morphism $n:P\rightarrow Q$ belongs to $(\mathcal{N}^\mathbb{C})_\mathbb{P}$. Then there is a commutative square
$$\xymatrix{ P'\ar[r]^-{n'}\ar@{->>}[d]_-{p} & Q'\ar@{->>}[d]^-{q} \\ P\ar[r]_-{n} & Q }$$ 
in $\mathbb{P}$, where $p$ and $q$ are regular epimorphisms in $\mathbb{C}$ and $n'\in\mathcal{N}$. By ``projectivity'' of $P$, the morphism $p$ has a right inverse $p'$. Then, $n=npp'=qn'p'\in\mathcal{N}$.

(b): Let $\mathcal{N}$ be an ideal in $\mathbb{P}$. Suppose $\mathcal{N}$ has weak $\mathcal{N}$-kernels. To show that $\mathbb{C}$ has  $\mathcal{N}^\mathbb{C}$-kernels, consider a morphism $f:B\rightarrow C$ in $\mathbb{C}$. Then, construct the diagram
$$\xymatrix{ P\ar[r]^-{k}\ar@{->>}[dd]_-{e} & Q\ar@{->>}[d]^-{q}\ar[r]^-{p_2q} & R\ar@{=}[d] \\  & B\times_C R\ar@{->>}[d]^-{p_1}\ar[r]^-{p_2} & R\ar@{->>}[d]^-{r} \\ A\ar@{ >->}[r]_-{m} & B\ar[r]_-{f} & C }$$
where the bottom right square is a pullback, with $r$ and $q$ arbitrary regular epimorphisms whose domains are in $\mathbb{P}$ (such morphisms exist since $\mathbb{P}$ is a projective cover of $\mathbb{C}$). Further, $k$ is a weak $\mathcal{N}$-kernel of $p_2q$ in $\mathbb{P}$ and $p_1 q k=me$ is a decomposition of $p_1q k$ via a regular epimorphism $e$ followed by a monomorphism $m$. Commutativity of the outer square whose vertical edges are regular epimorphisms and the top edge belongs to $\mathcal{N}$ shows that $fm\in\mathcal{N}^\mathbb{C}$. By a simple routine argument which we omit, $m$ is in fact an $\mathcal{N}^\mathbb{C}$-kernel of $f$. 

Suppose now that $\mathbb{C}$ has $\mathcal{N}^\mathbb{C}$-kernels. Consider a morphism $u:Q\rightarrow R$ in $\mathbb{P}$. Let $k:K\rightarrow Q$ be its $\mathcal{N}^\mathbb{C}$-kernel in $\mathbb{C}$. Then it is easy to see that for any regular epimorphism $p:P\rightarrow K$ with $P\in\mathbb{P}$, the composite $kp$ will be a weak $\mathcal{N}$-kernel of $u$ in $\mathbb{P}$.

(c): Now let $\mathcal{N}$ be an ideal in $\mathbb{C}$. If $\mathcal{N}$ admits kernels then weak kernels for $\mathcal{N}_\mathbb{P}$ can be constructed in the same way as right above: we take a morphism $u:Q\rightarrow R$ in $\mathbb{P}$ and consider its $\mathcal{N}$-kernel $k:K\rightarrow Q$ in $\mathbb{C}$; then, for any regular epimorphism $p:P\rightarrow K$ with $P\in\mathbb{P}$, the composite $kp$ will be a weak $\mathcal{N}$-kernel of $u$ in $\mathbb{P}$. Next, we show that when $\mathcal{N}$ admits kernels, we have $(\mathcal{N}_\mathbb{P})^\mathbb{C}\subseteq\mathcal{N}$. Consider a morphism $f:X\rightarrow Y$ which belongs to $(\mathcal{N}_\mathbb{P})^\mathbb{C}$. Then there is a commutative diagram
$$\xymatrix{ P\ar[r]^-{n}\ar@{->>}[d]_-{p} & Q\ar@{->>}[d]^-{q} \\ X\ar[r]_-{f} & Y }$$ where $p$ and $q$ are regular epimorphisms, $P$ and $Q$ are objects in $\mathbb{P}$, and $n\in\mathcal{N}$. Let $k$ be an $\mathcal{N}$-kernel of $f$. Then $p$ must factor through $k$ which forces $k$ to be a regular epimorphism and hence an isomorphism (since, being a kernel, it is already a monomorphism). Since an isomorphism is an $\mathcal{N}$-kernel of $f$, it follows that $f\in\mathcal{N}$. 

The proof of (d) is straightforward, while (e) is a consequence of (d) and (a). 
\end{proof}

The lemma above implies that the assignments $\mathcal{N}\mapsto\mathcal{N}^\mathbb{C}$  and $\mathcal{N}\mapsto\mathcal{N}_\mathbb{P}$ give rise to two Galois connections
$$\mathfrak{I}(\mathbb{P})\leftrightarrows\mathfrak{I}_\mathsf{s}(\mathbb{C})$$
$$\mathfrak{I}_\mathsf{wk}(\mathbb{P})\rightleftarrows\mathfrak{I}_\mathsf{k}(\mathbb{C})$$
where 
\begin{itemize}
\item $\mathfrak{I}_\mathsf{s}(\mathbb{C})$ denotes the ordered set of ideals in $\mathbb{C}$ admitting saturating regular epimorphisms;

\item $\mathfrak{I}_\mathsf{wk}(\mathbb{P})$ denotes the ordered set of ideals in $\mathbb{P}$ admitting weak kernels;

\item $\mathfrak{I}_\mathsf{k}(\mathbb{C})$ denotes the ordered set of ideals in $\mathbb{C}$ admitting kernels;

\item in each case, the top arrow indicates the direction of the left adjoint.
\end{itemize}
At the same time, they establish an isomorphism
$$\mathfrak{I}_\mathsf{wk}(\mathbb{P})\approx\mathfrak{I}_\mathsf{s}(\mathbb{C})\cap \mathfrak{I}_\mathsf{k}(\mathbb{C}).$$

Let $\mathbb{P}$ be a category with weak finite limits. Recall from \cite{CarVit98} that the regular completion of such a category is (up to  an equivalence of categories) a regular category $\mathbb{C}$ such that $\mathbb{P}$ is a subcategory of $\mathbb{C}$ and, moreover, the following conditions hold:
\begin{itemize}
\item $\mathbb{P}$ is a projective cover of $\mathbb{C}$ (see Section~\ref{SecA}).

\item For every object $C$ of $\mathbb{C}$ there is a monomorphism
$$\xymatrix{C\ar@{ >->}[r] & \prod_{i=1}^n P_i}$$
where each $P_i$ belongs to $\mathbb{P}$.
\end{itemize}

\begin{theorem}\label{The C} Let $(\mathbb{C},\mathcal{N})$ be a regular multi-pointed category with kernels, and let $\mathbb{P}$ be a projective cover of $\mathbb{C}$. Then:
\begin{itemize}
\item if $(\mathbb{C},\mathcal{N})$ is star-regular, then every reflexive graph in $(\mathbb{P},\mathcal{N}_\mathbb{P})$ satisfies $(\ast\pi_0)$;
\item the converse of this also holds, provided $\mathbb{C}$ is a regular completion of $\mathbb{P}$.
\end{itemize}
\end{theorem}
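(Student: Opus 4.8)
The plan is to reduce everything to Corollary~\ref{Cor A}, which identifies star-regularity of $(\mathbb{C},\mathcal{N})$ with the condition that every internal reflexive graph in $\mathbb{C}$ satisfies $(\ast\pi_0)$ with respect to $\mathcal{N}$; its hypotheses hold because $\mathbb{C}$ is regular and has $\mathcal{N}$-kernels. For the first (unconditional) implication I would take a reflexive graph in $\mathbb{P}$ with structure maps $d,c\colon G_1\to G_0$, $e\colon G_0\to G_1$, and verify $(\ast\pi_0)$ directly. Let $\kappa\colon K\to G_1$ be the $\mathcal{N}$-kernel of $d$ in $\mathbb{C}$ and choose a regular epimorphism $p\colon P\twoheadrightarrow K$ with $P\in\mathbb{P}$; by the construction in the proof of Lemma~\ref{Lem A}(c), $k=\kappa p$ is a weak $\mathcal{N}_\mathbb{P}$-kernel of $d$ in $\mathbb{P}$. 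Since $p$ is epic, for any $g\colon G_0\to Z$ in $\mathbb{P}$ the equality $gdk=gck$ gives $gd\kappa=gc\kappa$; as the same pair $(d,c)$ read in $\mathbb{C}$ satisfies $(\ast\pi_0)$ with $\mathcal{N}$-star $(d\kappa,c\kappa)$, this yields $gd=gc$ (the morphism $g$ is admissible because $\mathbb{P}$ is full in $\mathbb{C}$). This establishes $(\ast\pi_0)$ for $(d,c)$ in $\mathbb{P}$ and uses only that $\mathbb{P}$ is a projective cover.

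For the converse, assuming $\mathbb{C}$ is a regular completion of $\mathbb{P}$, I would show that every reflexive graph in $\mathbb{C}$, with maps $d,c\colon C_1\to C_0$ and $e\colon C_0\to C_1$, satisfies $(\ast\pi_0)$. First I would cover it by a reflexive graph in $\mathbb{P}$: take a regular epimorphism $p_0\colon P_0\twoheadrightarrow C_0$ with $P_0\in\mathbb{P}$, pull back $\langle d,c\rangle\colon C_1\to C_0\times C_0$ along $p_0\times p_0$ to obtain $Q$, cover $Q$ by a regular epimorphism from some $P_1\in\mathbb{P}$, and let $d',c'\colon P_1\to P_0$ be the two legs and $q_1\colon P_1\twoheadrightarrow C_1$ the induced comparison (automatically a regular epimorphism), so that $p_0d'=dq_1$ and $p_0c'=cq_1$. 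Projectivity of $P_0$ lets me lift the datum built from the diagonal of $P_0$ and from $ep_0$ to a reflexivity $e'\colon P_0\to P_1$, making this a genuine reflexive graph in $\mathbb{P}$, which therefore satisfies $(\ast\pi_0)$ by hypothesis; moreover $\mathbb{P}$ has weak $\mathcal{N}_\mathbb{P}$-kernels by Lemma~\ref{Lem A}(c), so $d'$ admits a weak $\mathcal{N}_\mathbb{P}$-kernel $k'\colon K'\to P_1$.

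Now let $\kappa\colon K\to C_1$ be the $\mathcal{N}$-kernel of $d$, suppose $gd\kappa=gc\kappa$ for some $g\colon C_0\to Z$, and aim at $gd=gc$. Using the second defining property of the regular completion, embed $Z$ by a monomorphism $m\colon Z\rightarrowtail\prod_{i=1}^n P_i$ with each $P_i\in\mathbb{P}$; writing $h_i=\pi_i mg\colon C_0\to P_i$, the goal $gd=gc$ is equivalent to $h_id=h_ic$ for every $i$, and since $q_1$ is epic this is equivalent to $(h_ip_0)d'=(h_ip_0)c'$, where $h_ip_0$ is a morphism of $\mathbb{P}$. By $(\ast\pi_0)$ for $(d',c')$ it then suffices to check $(h_ip_0)d'k'=(h_ip_0)c'k'$. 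The crux is that $d'k'\in\mathcal{N}$, so, $\mathcal{N}$ being an ideal, $d(q_1k')=p_0(d'k')\in\mathcal{N}$, whence the universal property of $\kappa$ supplies $w\colon K'\to K$ with $q_1k'=\kappa w$; therefore $(h_ip_0)d'k'=h_id\kappa w$ and $(h_ip_0)c'k'=h_ic\kappa w$, and these agree because the hypothesis $gd\kappa=gc\kappa$ gives $h_id\kappa=h_ic\kappa$. Running the chain of equivalences back produces $gd=gc$.

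I expect the main obstacle to be exactly the passage to an arbitrary codomain $Z$ in the converse: the condition $(\ast\pi_0)$ available in $\mathbb{P}$ can only be tested against morphisms of $\mathbb{P}$, whereas the given $g$ has codomain in $\mathbb{C}$. Overcoming this is precisely where the extra regular-completion hypothesis --- every object is a subobject of a finite product of projectives --- is indispensable, rather than mere projective covering; the remaining care lies in constructing the covering reflexive graph in $\mathbb{P}$ and in the factorisation $q_1k'=\kappa w$ that links the weak $\mathcal{N}_\mathbb{P}$-kernel of $d'$ with the $\mathcal{N}$-kernel of $d$.
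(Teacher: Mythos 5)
Your proposal is correct and follows essentially the same route as the paper's own proof: the first implication via restriction of the $\mathcal{N}$-kernel along a projective cover and cancellation of the epimorphism $p$, and the converse by covering a reflexive graph of $\mathbb{C}$ with one in $\mathbb{P}$ (pullback of $\langle d,c\rangle$ along $p_0\times p_0$, then a projective cover and a lifted reflexivity), embedding the test object $Z$ into a finite product of projectives, and linking the weak $\mathcal{N}_\mathbb{P}$-kernel of $d'$ to the $\mathcal{N}$-kernel of $d$ by the factorisation $q_1k'=\kappa w$. The diagram in the paper is organised slightly differently, but every step of your argument corresponds to one there, including the final passage through the jointly monomorphic projections $\pi_i m$ and the epimorphism $q_1$.
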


\begin{proof} Note that by Lemma~\ref{Lem A}(c), the multi-pointed category $(\mathbb{P},\mathcal{N}_\mathbb{P})$ has weak kernels. Moreover, as we showed in the proof of that lemma, a weak $\mathcal{N}_\mathbb{P}$-kernel of a morphism in $\mathbb{P}$ can be constructed by ``covering'' the domain of its  $\mathcal{N}$-kernel in $\mathbb{C}$ with a regular epimorphism whose domain is in $\mathbb{P}$. In particular, consider a diagram
$$\xymatrix{ P\ar@{->>}[r]^-{p} & K\ar[r]^-{k} & G_1\ar@/^4pt/[r]^-{d}\ar@/_4pt/[r]_-{c} & G_0\ar[l]|-{e}}$$
where $P,G_1,G_0$ are in $\mathbb{P}$, with $de=1_{G_0}=ce$, and $k$ is an $\mathcal{N}$-kernel of $d$ in $\mathbb{C}$ and $p$ is a regular epimorphism, so that $kp$ is a weak $\mathcal{N}_\mathbb{P}$-kernel of $d$ in $\mathbb{P}$. It is then easy to see that if the reflexive graph in this diagram satisfies $(\ast\pi_0)$ with respect to $\mathcal{N}$ in $\mathbb{C}$, then it also satisfies $(\ast\pi_0)$ with respect to $\mathcal{N}_\mathbb{P}$ in $\mathbb{P}$. Thus, the validity of the condition $(\ast\pi_0)$ for reflexive graphs in $(\mathbb{C},\mathcal{N})$ implies the validity of the same condition for reflexive graphs in $(\mathbb{P},\mathcal{N}_\mathbb{P})$. Together with Corollary~\ref{Cor A} this proves the first part of the theorem.

Conversely, assume that $\mathbb{C}$ is a regular completion of $\mathbb{P}$, and that reflexive graphs in $\mathbb{P}$ satisfy $(\ast\pi_0)$. Consider a diagram (\ref{Equ B}) in $\mathbb{C}$, with the middle-part being a reflexive graph while $k$ being an $\mathcal{N}$-kernel of $d$ in $\mathbb{C}$. Suppose $ldk=lck$. We want to show $ld=lc$. Extend this diagram to the diagram
$$\xymatrix@=40pt{ K'\ar[r]^-{k'}\ar[dd]_-{s} & R\ar@{->>}[d]_-{r}\ar@/^7pt/[r]^-{ur}\ar@/_7pt/[r]_-{vr} & Q\ar@{=}[d]\ar[l]|-{j}\ar[rr]^-{m_alq} & & P_a \\ & G_1\times_{G_0\times G_0} Q\ar@{->>}[d]_-{p}\ar@/^7pt/[r]^-{u}\ar@/_7pt/[r]_-{v} & Q\ar@{->>}[d]^-{q}\ar[l]|-{i} & & \\  K\ar[r]^-{k} & G_1\ar@/^7pt/[r]^-{d}\ar@/_7pt/[r]_-{c} & G_0\ar[l]|-{e}\ar[r]^-{l} & L\ar@{ >->}[r]^-{m}\ar[uur]^-{m_a} & P_1\times\cdots\times P_n\ar[uu]_-{\pi_a} }$$
as follows: 
\begin{itemize}
\item $m$ is a monomorphism from $L$ into a product of objects $P_a$ from $\mathbb{P}$, while each $\pi_a$ is the $a$'th product projection (and $m_a=\pi_am$).

\item $q$ is any regular epimorphism whose domain $Q$ is in $\mathbb{P}$.

\item The indicated limit of the bottom middle square results in the projection $p$ also being a regular epimorphism (since it is obtained as a pullback of $q\times q$ which is a regular epimorphism because $q$ is). The morphism $i$ is the canonical morphism into this limit determined by the identities $pi=eq$ and $ui=1_Q=vi$.

\item $r$ is again any regular epimorphism whose domain $R$ is an object in $\mathbb{P}$.

\item By ``projectivity'' of $Q$, we get $j$ such that $rj=i$.

\item $k'$ is a weak $\mathcal{N}_\mathbb{P}$-kernel of $ur$ in $\mathbb{P}$. Since $dprk'=qurk'$ and $urk'\in\mathcal{N}_\mathbb{P}\subseteq\mathcal{N}$, it follows that $dprk'\in\mathcal{N}$ and so we get a morphism $s$ with $ks=prk'$. 
\end{itemize}
Now, since by our assumption $ldk=lck$, for each $a\in\{1,\dots,n\}$ we have
$$m_alqurk'=m_aldks=m_alcks=m_alqvrk'.$$
Since $m_alq$ is in $\mathbb{P}$, we can use the assumption on the top reflexive graph to conclude $m_alqur=m_alqvr$. Since $pr$ is an epimorphism, this gives $m_ald=m_alc$. Now, the $m_a$'s are jointly monomorphic, and so $ld=lc$ as desired.
\end{proof}

\begin{corollary}\label{Cor C}
Let $(\mathbb{P},\mathcal{N})$ be a multi-pointed category, where $\mathbb{P}$ has weak finite limits and weak $\mathcal{N}$-kernels. For the regular completion $\mathbb{C}$ of $\mathbb{P}$, the pair $(\mathbb{C},\mathcal{N}^\mathbb{C})$ is star-regular if and only if every reflexive graph in $\mathbb{P}$ satisfies $(\ast\pi_0)$.
\end{corollary}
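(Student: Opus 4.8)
The plan is to obtain this corollary as a direct application of Theorem~\ref{The C} to the extended ideal $\mathcal{N}^\mathbb{C}$ on $\mathbb{C}$, with the bookkeeping between the two ideals supplied by Lemma~\ref{Lem A}. First I would record the structural facts provided by the hypotheses: since $\mathbb{C}$ is the regular completion of $\mathbb{P}$, the category $\mathbb{C}$ is regular and $\mathbb{P}$ is a projective cover of $\mathbb{C}$. To put myself in a position to invoke Theorem~\ref{The C}, I must check that $(\mathbb{C},\mathcal{N}^\mathbb{C})$ is a regular multi-pointed category \emph{with kernels}. This is exactly where the assumption that $\mathbb{P}$ has weak $\mathcal{N}$-kernels is used: by Lemma~\ref{Lem A}(b), $\mathbb{P}$ has weak $\mathcal{N}$-kernels if and only if $\mathbb{C}$ has $\mathcal{N}^\mathbb{C}$-kernels, so $\mathbb{C}$ indeed has $\mathcal{N}^\mathbb{C}$-kernels.

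With the hypotheses of Theorem~\ref{The C} verified for the multi-pointed category $(\mathbb{C},\mathcal{N}^\mathbb{C})$ and its projective cover $\mathbb{P}$, I would apply that theorem. Its first bullet gives that star-regularity of $(\mathbb{C},\mathcal{N}^\mathbb{C})$ implies that every reflexive graph in $(\mathbb{P},(\mathcal{N}^\mathbb{C})_\mathbb{P})$ satisfies $(\ast\pi_0)$; and since $\mathbb{C}$ is a regular completion of $\mathbb{P}$, the second bullet supplies the converse. Thus Theorem~\ref{The C} already yields the desired equivalence, except that it is phrased in terms of the restricted ideal $(\mathcal{N}^\mathbb{C})_\mathbb{P}$ rather than the original ideal $\mathcal{N}$.

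The final step, then, is to reconcile these two ideals, and this is where Lemma~\ref{Lem A}(a) closes the gap: for any ideal $\mathcal{N}$ in $\mathbb{P}$ one has $(\mathcal{N}^\mathbb{C})_\mathbb{P}=\mathcal{N}$. Consequently the weak $(\mathcal{N}^\mathbb{C})_\mathbb{P}$-kernels used to form stars in $\mathbb{P}$ are precisely the weak $\mathcal{N}$-kernels assumed to exist, and the statement ``every reflexive graph in $(\mathbb{P},(\mathcal{N}^\mathbb{C})_\mathbb{P})$ satisfies $(\ast\pi_0)$'' coincides verbatim with ``every reflexive graph in $\mathbb{P}$ satisfies $(\ast\pi_0)$''. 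Combining this identification with the equivalence extracted from Theorem~\ref{The C} gives the corollary. I do not expect a genuine obstacle here: the mathematical substance is carried entirely by Theorem~\ref{The C}, and the only points requiring care are the two pieces of bookkeeping --- confirming via Lemma~\ref{Lem A}(b) that the kernel hypothesis of Theorem~\ref{The C} is met, and using Lemma~\ref{Lem A}(a) to ensure that the ambient ideal on $\mathbb{P}$ is $\mathcal{N}$ itself, so that the $(\ast\pi_0)$ condition being quantified over is the intended one.
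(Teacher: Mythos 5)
Your proposal is correct and follows exactly the derivation the paper intends: the paper states Corollary~\ref{Cor C} without proof as an immediate consequence of Theorem~\ref{The C}, with Lemma~\ref{Lem A}(b) supplying the existence of $\mathcal{N}^\mathbb{C}$-kernels in $\mathbb{C}$ and Lemma~\ref{Lem A}(a) identifying $(\mathcal{N}^\mathbb{C})_\mathbb{P}$ with $\mathcal{N}$. Your bookkeeping makes explicit precisely the two points the paper leaves tacit, and nothing is missing.
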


\section{The pointed context}

A category $\mathbb{C}$ is said to be \emph{pointed} if there is an ideal $\mathcal{N}$ in $\mathbb{C}$, such that for any two objects $X$ and $Y$ in $\mathbb{C}$ there is exactly one morphism $X\rightarrow Y$ which belongs to $\mathcal{N}$. Such an ideal, when it exists, is necessarily unique. 

\begin{remark}\label{RemA}
Thus, for us a pointed category is simply a category enriched in the category of pointed sets. Note in particular that we do not require the existence of a zero object (unlike what is usually done in the literature), since its presence is not needed for our purposes.
\end{remark}

It is not difficult to show that if $\mathbb{P}$ is a projective cover of $\mathbb{C}$, then $\mathbb{P}$ is a pointed category if and only if so is $\mathbb{C}$, and moreover, if $\mathcal{N}$ denotes the ideal of null morphisms in $\mathbb{P}$ and  $\mathcal{M}$ denotes the ideal of null morphisms in $\mathbb{C}$, then we have $\mathcal{N}^\mathbb{C}=\mathcal{M}$ and $\mathcal{M}_\mathbb{P}=\mathcal{N}$. Thus, in the pointed case Corollary~\ref{Cor C} becomes:   

\begin{corollary}\label{Cor B}
Let $\mathbb{P}$ be a pointed category, where $\mathbb{P}$ has weak finite limits. The regular completion $\mathbb{C}$ of $\mathbb{P}$ is a normal category if and only if every reflexive graph in $\mathbb{P}$ satisfies $(\ast\pi_0)$ for the class $\mathcal{N}$ of null morphisms in $\mathbb{P}$.
\end{corollary}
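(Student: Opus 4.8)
The plan is to read off Corollary~\ref{Cor B} as the pointed specialisation of Corollary~\ref{Cor C}; the work consists only in checking that the hypotheses of the latter are automatically met here, and in translating its conclusion into the language of normality.

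First I would verify the standing assumptions of Corollary~\ref{Cor C} for the pair $(\mathbb{P},\mathcal{N})$, where $\mathcal{N}$ is the (unique) ideal of null morphisms. Pointedness makes $(\mathbb{P},\mathcal{N})$ a multi-pointed category, and $\mathbb{P}$ has weak finite limits by hypothesis, so it only remains to produce weak $\mathcal{N}$-kernels. Here the key observation is that, because $\mathcal{N}$ is an ideal, a weak $\mathcal{N}$-kernel of a morphism $f\colon X\to Y$ is exactly a weak equalizer of the pair $(f,0_{X,Y})$, where $0_{X,Y}$ denotes the null morphism: the condition $fk\in\mathcal{N}$ says precisely $fk=0_{K,Y}=0_{X,Y}k$, and the two weak universal properties coincide. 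Since weak equalizers are among the weak finite limits, $\mathbb{P}$ has weak $\mathcal{N}$-kernels, and Corollary~\ref{Cor C} applies verbatim.

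Corollary~\ref{Cor C} then yields that $(\mathbb{C},\mathcal{N}^\mathbb{C})$ is star-regular if and only if every reflexive graph in $\mathbb{P}$ satisfies $(\ast\pi_0)$ for $\mathcal{N}$. To reach the statement I would next invoke the observation recorded immediately before the corollary: since $\mathbb{P}$ is a projective cover of its regular completion $\mathbb{C}$ and $\mathbb{P}$ is pointed, $\mathbb{C}$ is again pointed, and its ideal $\mathcal{M}$ of null morphisms satisfies $\mathcal{N}^\mathbb{C}=\mathcal{M}$. Thus star-regularity of $(\mathbb{C},\mathcal{N}^\mathbb{C})$ is the same as star-regularity of $(\mathbb{C},\mathcal{M})$. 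Finally, as recalled in the Introduction, for a pointed regular category equipped with its null-morphism ideal, star-regularity is by definition the assertion that every regular epimorphism is a normal epimorphism --- that is, normality. Hence $(\mathbb{C},\mathcal{M})$ is star-regular precisely when $\mathbb{C}$ is a normal category, and combining this with the biconditional above completes the argument.

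I do not expect a genuine obstacle in this corollary, since all the substance is already carried by Theorem~\ref{The A} and Corollary~\ref{Cor C}; the only points demanding care are the two translations noted above: that weak finite limits already deliver weak $\mathcal{N}$-kernels in the pointed setting (via weak equalizers against the null morphism), and that the regular completion inherits pointedness with $\mathcal{N}^\mathbb{C}$ coinciding with its own null-morphism ideal. The latter is the identity $\mathcal{N}^\mathbb{C}=\mathcal{M}$ asserted in the paragraph preceding the statement; were it not already granted, I would verify it by checking that a morphism of $\mathbb{C}$ is null exactly when it fits into a commutative square between regular-epimorphic covers with a null top edge in $\mathbb{P}$, using projectivity of the covers to transport the null property in both directions.
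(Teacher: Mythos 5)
Your proposal is correct and follows essentially the same route as the paper, which derives Corollary~\ref{Cor B} by specialising Corollary~\ref{Cor C} to the pointed case via the identities $\mathcal{N}^\mathbb{C}=\mathcal{M}$ and $\mathcal{M}_\mathbb{P}=\mathcal{N}$ stated just before it. Your two explicit verifications --- that weak finite limits supply weak $\mathcal{N}$-kernels as weak equalizers against null morphisms, and that star-regularity for the null-morphism ideal is exactly normality --- are precisely the translations the paper leaves implicit, and both are carried out correctly.
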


Note that by Theorem~\ref{The C}, the `only if' part in the above result holds even when $\mathbb{P}$ is merely a projective cover of $\mathbb{C}$. We now show that the `if' part might not hold in this case, i.e.~it may be possible for reflexive graphs to satisfy $(\ast\pi_0)$ in a projective cover of a pointed regular category which is not normal.
\begin{example}
Consider a set of operators, $\Omega=\{d,0\}$, where $d$ is a binary operator and $0$ is a nullary operator. Let $E$ be the set of identities, consisting of the single identity $d(x,x)=0$. Take $\mathbb{C}$ to be the variety of $(\Omega,E)$-algebras, and let $\mathbb{C}'$ be the quasi-variety of $\mathbb{C}$ consisting of those algebras which satisfy the quasi-identity
\begin{equation}\label{qi} d(x,y)=0\quad\Rightarrow\quad x=y.\end{equation}
Let $\mathbb{P}$ be the full subcategory of $\mathbb{C}$ consisting of the free $(\Omega,E)$-algebras. Then $\mathbb{P}$ is a projective cover of $\mathbb{C}$. We claim that $\mathbb{P}$ is also a projective cover of $\mathbb{C}'$. For this, it suffices to show that every free $(\Omega,E)$-algebra satisfies the quasi-identity (\ref{qi}). Indeed:
\begin{itemize}
\item Consider the free $\Omega$-algebra $\mathsf{Fr}_\Omega X$ over an alphabet $X$, and define an operation $d':\mathsf{Fr}_\Omega X\times \mathsf{Fr}_\Omega X\rightarrow \mathsf{Fr}_\Omega X$ as follows: for any two distinct words $w_1$ and $w_2$, set $d'(w_1,w_2)$ to be the word $dw_1w_2$; for each word $w_1$ set $d'(w_1,w_1)=0$. 

\item Consider the smallest subset $F$ of $\mathsf{Fr}_\Omega X$ which contains the alphabet $X$, the word $0$, and is closed under the operation $d'$. We shall write $d'$ also for the restriction of the operation $d'$ to the set $F$.   

\item It is easy to see that $(F,d',0)$ is an $(\Omega,E)$-algebra, and, moreover, it satisfies the quasi-identity (\ref{qi}).  

\item This algebra is freely generated by the alphabet $X$. Indeed, we already know that it is generated by $X$. To show that it is freely generated by $X$, consider any map $f:X\rightarrow A$ from $X$ to an $(\Omega,E)$-algebra $A$, and let $f':\mathsf{Fr}_\Omega X\rightarrow A$ be the induced homomorphism of $\Omega$-algebras. Then the restriction $f''$ of $f'$ on $F$ is a homomorphism $f'':(F,d',0)\rightarrow A$ since for $w_1,w_2\in F$, where $w_1\neq w_2$,
$$f''(d(w_1,w_2))=f'(dw_1w_2)=d(f'(w_1),f'(w_2)),$$
and when $w_1=w_2$,
$$f''(d(w_1,w_2))=f''(0)=f'(0)=0.$$
At the same time, for any $x\in X$ we have $f''(x)=f'(x)=f(x)$.     
\end{itemize}  
Now, $\mathbb{C}'$ is a normal category (see~\cite{Fic68}). The first part of Theorem~\ref{The C} applied to $\mathbb{P}$ as the projective cover of $\mathbb{C}$ gives that reflexive graphs in $\mathbb{P}$ satisfy $(\ast\pi_0)$ (for the ideal of null morphisms in the pointed category $\mathbb{P}$). However, $\mathbb{C}$ is not a normal category. 
\end{example}
The example above also shows that normality somehow behaves differently with respect to regular completions than it does with respect to exact completions \cite{CarVit98}. Indeed, it shows that there exists a pointed category with weak finite limits, such that reflexive graphs in $\mathbb{P}$ satisfy $(\ast\pi_0)$, and at the same time the exact completion of $\mathbb{P}$ is not a normal category. On the other hand, by Corollary~\ref{Cor B}, the regular completion of such $\mathbb{P}$ is always normal. 

Similar questions were studied in \cite{Gra02} with respect to protomodular categories \cite{Bou91} in the place of normal categories. It is interesting to observe that, unlike normality, protomodularity has the same behaviour with respect to the regular and the exact completions. This is due to the fact that the condition used in \cite{Gra02} for $\mathbb{P}$ implies protomodularity of $\mathbb{C}$ already when $\mathbb{P}$ is merely a projective cover of $\mathbb{C}$. Note, however, that unlike in the present case, the condition used in \cite{Gra02} for $\mathbb{P}$ does not imply the same condition on $\mathbb{C}$. Moreover, protomodularity of $\mathbb{P}$ does not imply, in general, protomodularity of its regular completion. For instance, the regular completion of the protomodular category of groups is not a protomodular category (while it is still normal by Corollary~\ref{Cor B}). 

There is an analogous cleavage between normal categories and regular Mal'tsev categories \cite{CarLamPed90}, as it follows from the earlier investigation of the projective covers of regular Mal'tsev categories, due to J.~Rosick\'y and E.~M.~Vitale \cite{RosVit01}. The results obtained in \cite{GraRod12} show that the cases of unital and strongly unital categories \cite{Bou96}, and of subtractive categories \cite{ZJan05}, are also similar to that of protomodular categories. 

In a way, this shows that the behaviour of normality is not ``normal'', if we compare it to the behaviour of other similar exactness properties. It would be interesting to understand this phenomenon better, as well as to explore which other exactness properties studied in categorical algebra have similar unusual behaviour.


\begin{thebibliography}{...}
\bibitem{Bou91}
D. Bourn, Normalization equivalence, kernel equivalence and affine
categories, Springer Lecture Notes in Mathematics 1488,
1991, 43-62.

\bibitem{Bou96} D. Bourn, Mal'cev categories and fibration of pointed objects, Applied Categorical Structures 4, 1996, 307-327.

\bibitem{Bro70} R. Brown, Fibrations of groupoids, Journal of Algebra 15, 1970, 103-132.

\bibitem{Bro06} R. Brown, Topology and groupoids, BookSurge, LLC, Charleston, 2006.

\bibitem{CarLamPed90} A. Carboni, J. Lambek, and M. C. Pedicchio, Diagram chasing in Mal'cev categories, Journal of Pure and Applied Algebra 69, 1990, 271-284.

\bibitem{CarVit98} A. Carboni and E. M. Vitale, Regular and exact completions, Journal of Pure and Applied Algebra 125, 1998, 79-116.

\bibitem{Ehr64} C. Ehresmann, Sur une notion g\'en\'erale de cohomologie, C. R. Acad. Sci. Paris 259, 1964, 2050-2053.

\bibitem{Fic68} K. Fichtner, Varieties of universal algebras with ideals, Mat. Sbornik, N. S. 75 (117), 1968, 445-453 (English translation: Math. USSR Sbornik 4, 1968, 411-418).

\bibitem{Gra02} M.~Gran, Semi-abelian exact completions, Homology Homotopy Applications 4, 2002, 175-189.

\bibitem{GraZJanUrs12} M. Gran, Z. Janelidze, and A. Ursini, A good theory of ideals in regular multi-pointed categories, Journal of Pure and Applied Algebra 216, 2012, 1905-1919.

\bibitem{GraZJanRodUrs12} M. Gran, Z. Janelidze, D. Rodelo, and A. Ursini, Symmetry of regular diamonds, the Goursat property, and subtractivity, Theory and Applications of Categories 27, 2012, 80-96.

\bibitem{GraZJanRod12} M. Gran, Z. Janelidze, and D. Rodelo, 3x3 lemma for star-exact sequences, Homology, Homotopy and Applications 14 (2), 2012, 1-22.

\bibitem{GraNga13} M. Gran and O. N. Ngaha, Effective descent morphisms in star-regular categories,  Homology, Homotopy and Applications, 15 (2) 2013, 127-144.

\bibitem{GraRod12} M.~Gran and D.~Rodelo, On the characterisation of J\'onsson-Tarski and of subtractive varieties, Diagrammes, Suppl. Vol. 67-68, 2012, 101-116.

\bibitem{MGra92} M.~Grandis, On the categorical foundations of homological and homotopical algebra, Cah. Top. G\'eom. Diff. Cat\'eg. 33, 1992, 135-175.

\bibitem{MGra12} M.~Grandis, Homological Algebra, The interplay of homology with distributive lattices and orthodox semigroups, World Scientific Publishing Co., Singapore, 2012.

\bibitem{MGra13} M.~Grandis, Homological Algebra In Strongly Non-Abelian Settings, World Scientific Publishing Co., Singapore, 2013.

\bibitem{GJan03} G.~Janelidze, Internal crossed modules, Georg. Math. Journal 10, 2003, 99-114.

\bibitem{ZJan05}
Z. Janelidze, Subtractive categories, Applied Categorical
Structures 13, 2005, 343-350.

\bibitem{ZJan10} Z. Janelidze, The pointed subobject functor, 3x3 lemmas, and subtractivity of spans, Theory and Applications of Categories 23, 2010, 221-242.

\bibitem{RosVit01} J.~Rosick\'y and E.~M.~Vitale, Exact completion and representations in abelian categories, Homology, Homotopy and Applications 3, 2001, 453-466.

\bibitem{Vit94} E.~M.~Vitale, Left covering functors, Dissertation, Universit\'e catholique de Louvain, 1994.

\end{thebibliography}
\end{document}